\documentclass[11pt]{amsart}
\usepackage{amsmath,amssymb,amsfonts}
\usepackage{amsthm}
\usepackage{mathrsfs}
\usepackage[margin=1in]{geometry}

\theoremstyle{plain}
\newtheorem{theorem}{Theorem}
\newtheorem{proposition}[theorem]{Proposition}
\theoremstyle{definition}
\newtheorem{example}{Example}
\newtheorem{remark}{Remark}
\newtheorem{corollary}{Corollary}

\newtheorem{lemma}[theorem]{Lemma}

\newcommand{\R}{\mathbb{R}}

\newcommand{\tr}{\mathrm{tr}}
\newcommand{\dom}{\operatorname{dom}}

\newcommand{\rank}{\operatorname{rank}}
\begin{document}

	\title[Hidden Convexity of Three Quadratic Forms]{Hidden Convexity of the Joint Range of Three Quadratic Forms and Applications}

\author{Hoang Ngoc Tuan}
\address{Department of Mathematics, Hanoi Pedagogical University 2, Phu Tho, Vietnam}
\email{hoangngoctuan@hpu2.edu.vn}
\author{Nguyen Quang Huy}
\address{Department of Mathematics, Hanoi Pedagogical University 2, Phu Tho, Vietnam}
\email{nqhuy@hpu2.edu.vn}

	\begin{abstract}
	We investigate a hidden convexity phenomenon for triples of homogeneous
		quadratic forms. We give a complete characterization of the convexity of
		the upper image obtained by adding the nonnegative orthant to their joint
		range. More precisely, the upper image is convex if and only if either a
		nonzero nonnegative linear combination of the three associated matrices is
		positive semidefinite, or the three quadratic forms admit a common strict
		negative direction. In particular, the upper image is convex whenever one
		of the three quadratic forms is convex. As applications, we establish the
		convexity of an associated value function and derive a generalized S-lemma
		for implications involving two quadratic inequalities and a strict theorem
		 of the alternative for quadratic systems.
	\end{abstract}

	\keywords{Hidden convexity, Quadratic forms, Joint range, Upper image, S-lemma}

	\subjclass[2020]{90C20, 52A41, 15A63, 90C26}

	\maketitle
	
\section{Introduction}

Quadratic mappings and sets described by quadratic inequalities are widespread in optimization, control, and
signal processing \cite{AiLiangYuan2025,BeckEldar2006,BomzeJeyakumarLi2018,BazanOpazo2021,FloudasVisweswaran1995,JeyakumarHuyLi2009,LTY05book,LuoMaSo2010,Yakubovich1977}. Despite their simplicity, quadratic models often generate
highly nonconvex feasible regions and images. A remarkable and useful phenomenon, however, is that
many such nonconvex objects exhibit \emph{hidden convexity}: after a suitable reformulation,
the underlying structure becomes convex and can be exploited both algorithmically and theoretically.

Let $f_i$, $i=1,\ldots,m$, be quadratic functions on $\mathbb{R}^n$.
A variety of fundamental statements for quadratic systems depend crucially on the convexity of the upper image of the joint range
$\{(f_1(x), \ldots, f_m(x)) : x\in \mathbb{R}^n\}$, defined by
$${\rm U}(f_1,\ldots,f_m):=
\{(f_1(x), \ldots, f_m(x)) : x\in \mathbb{R}^n\} + \mathbb{R}^{m}_+.$$
Observe that ${\rm U}(f_1,\ldots,f_m)$ is convex whenever the joint range is convex.

Geometrically, the convexity of ${\rm U}(f_1,\ldots,f_m)$ allows nonlinear quadratic systems
to be treated through convex separation arguments, which lie at the core of duality,
S-procedures, and alternative theorems.
This geometric viewpoint explains why convexity of the upper image naturally leads to powerful analytical consequences.

In particular, the convexity of ${\rm U}(f_1,\ldots,f_m)$ is a key ingredient in the analysis of exact relaxations and strong duality \cite{BeckEldar2006,JeyakumarLi2014}.
Moreover, \cite[Theorem~1]{LaraUrruty2022} shows that the S-procedure holds under the convexity of ${\rm U}(f_1,\ldots,f_m)$ together with Slater's condition.
Furthermore, this convexity property also plays an important role in alternative theorems and minimax formulations \cite{Roubi2022}.

Despite its importance, the convexity of ${\rm U}(f_1,\ldots,f_m)$ does not hold in general.
It holds for $m=2$ \cite{TuyTuan2013,BazanOpazo2016,HHNTT2025}, but already fails for $m=3$ \cite[Example~2.1]{JeyakumarLi2014}.
Consequently, convexity may fail for any $m>2$, since  the case $m=3$ can be embedded into higher dimensions by appending zero quadratic forms.
For general $m$, the authors in \cite{JeyakumarLi2014} establish the convexity of ${\rm U}(f_1,\ldots,f_m)$ only for a particular trust-region-type setting, under an additional dimension condition imposed on the original data.

We now focus on the setting of (homogeneous) quadratic forms. This focus is natural, since optimization problems involving quadratic-form data have many important applications;
see \cite{AiLiangYuan2025,LuoMaSo2010} and the references therein.
It is well known that the joint range of two quadratic forms is convex (indeed, a convex cone) \cite{Dines1941,PolikTerlaky2007}.
However, the joint range of more than two quadratic forms need not be convex; for instance, \cite[Example~3.2]{Polyak1998} provides a counterexample for $m=3$.

Nevertheless, for the same data as in \cite[Example~3.2]{Polyak1998}, the corresponding upper image is still convex; see Example~\ref{exam_convexcase}  for details.
Note that in this example one component of the quadratic form system is convex, though not strongly convex.
In addition, for $n\ge3$, it follows from \cite[Theorem~2.1]{Polyak1998}  that if at least one of the quadratic forms $f_1,f_2,f_3$ is strongly convex, then the joint range
$\{(f_1(x),f_2(x),f_3(x)):\ x\in\mathbb R^n\}$ is convex; hence, the upper image ${\rm U}(f_1,f_2,f_3)$ is also convex.
Finally, Example~\ref{exam_nonconvex} demonstrates that, in general, if none of the components is convex, then ${\rm U}(f_1,f_2,f_3)$ need not be convex.

Motivated by the above discussion, this paper investigates the convexity of the upper image in the case $m=3$ within the homogeneous setting of quadratic forms.
More concretely, let $A_1,A_2,A_3\in\mathbb R^{n\times n}$ be symmetric matrices, and consider the homogeneous quadratic forms
$f_i(x)=\tfrac12 x^\top A_i x$ for $i=1,2,3$.
We define
$$
S:={\rm U}(f_1,f_2,f_3)= \{(f_1(x),f_2(x),f_3(x)):\ x\in\mathbb R^n\}+\mathbb R^3_+ .
$$

The main result of this paper gives a complete characterization of the
convexity of $S$. We prove that $S$ is convex if and only if at least one
of the following two conditions holds: either there exists
$\lambda\in\mathbb R_+^3\setminus\{0\}$ such that
$$
\lambda_1A_1+\lambda_2A_2+\lambda_3A_3\succeq0,
$$
or there exists $d\in\mathbb R^n$ such that
$$
d^\top A_i d<0,\qquad i=1,2,3.
$$
The first condition includes, as a special case, the situation where one
of the three quadratic forms is convex. Under the second condition, the
upper image is in fact equal to $\mathbb R^3$.

A key step in the proof is Lemma~\ref{lem_rankone}, which provides the
required rank-one reduction under the first condition. As consequences of
the characterization, we also obtain the convexity of an associated value
function and derive applications to a generalized S-lemma and to a theorem
of the alternative.

The remainder of the paper is organized as follows. Section~2 presents preliminaries. Section~3 proves the main characterization of the convexity of $S$ and gives
several examples. Section~4 presents applications of the convexity results, including the convexity of an associated value function, a generalized S-lemma, and a theorem of the alternative. Concluding remarks are given in the final section.

\section{Preliminaries}\label{Preliminaries}

We equip $\R^n$ with the Euclidean inner product
$\langle x,y\rangle:=x^\top y$ and the induced norm
$\|x\|:=\sqrt{x^\top x}$. For $m\in\mathbb N$, $\R_+^m$
denotes the nonnegative orthant.

Let $\mathbb S^n$ be the space of real symmetric $n\times n$ matrices and let
$\mathbb S_+^n$ denote the cone of positive semidefinite matrices.
For $M\in\mathbb S^n$, we write $M\succeq0$ (resp.\ $M\succ0$) if $M$ is
positive semidefinite (resp.\ positive definite). The trace inner product on
$\mathbb S^n$ is
$$
\langle U,V\rangle:=\tr(UV)\qquad (U,V\in\mathbb S^n),
$$
and the corresponding norm is
$$
\|U\|:=\sqrt{\langle U,U\rangle}.
$$

We recall some standard facts about positive semidefinite matrices that will
be used below; see, e.g., \cite[Chapters~4 and~7]{HornJohnson2013}.

\medskip
Every matrix $Z\in\mathbb S^n$ admits a spectral decomposition
$$
Z=Q\Lambda Q^\top,
$$
where $Q=[q^1,\ldots,q^n]$ is orthogonal and
$$
\Lambda=\operatorname{diag}(\mu_1,\ldots,\mu_n),
$$
with $\mu_1,\ldots,\mu_n$ being the eigenvalues of $Z$ and
$q^1,\ldots,q^n$ the corresponding orthonormal eigenvectors.

\medskip
If $Z\succeq0$, then $\mu_i\ge0$ for all $i$, and hence
$$
\tr Z=\sum_{i=1}^n\mu_i
\qquad\text{and}\qquad
\|Z\|^2=\tr(Z^2)=\sum_{i=1}^n\mu_i^2.
$$
Consequently, 
\begin{equation}\label{ine_trace_norm}
\|Z\|\le\tr Z.
\end{equation}

\medskip
Moreover, if $Z\in \mathbb{S}^n_+$ and
${\rm rank}\,Z=r$, then exactly $r$ eigenvalues of $Z$ are positive.
After reordering them if necessary, we can express
$$
Z=Q_r\Lambda_rQ_r^\top,
$$
where $Q_r=[q^1,\ldots,q^r]\in\R^{n\times r}$ has orthonormal columns and
$$
\Lambda_r=\operatorname{diag}(\mu_1,\ldots,\mu_r),
\qquad \mu_i>0.
$$
Set
$$
\Lambda_r^{1/2}
:=
\operatorname{diag}(\sqrt{\mu_1},\ldots,\sqrt{\mu_r})
$$
and
$$
V:=Q_r\Lambda_r^{1/2}.
$$
Then
\begin{equation}\label{equ_matrix_reprensent}
Z=VV^\top.
\end{equation}
Since $\Lambda_r^{1/2}$ is nonsingular and $Q_r$ has full column rank,
$V$ also has full column rank. 

It follows from  \eqref{equ_matrix_reprensent} that
\begin{equation}\label{equ_matrix_reprensent_2}
Z=\sum_{j=1}^r x^j(x^j)^\top,
\end{equation}
where $x^j:=\sqrt{\mu}q^j\,,j=1,\ldots,r$. In particular, if $\rank Z=1$ then $Z=xx^T$ for some $x\in \mathbb{R}^n$.

\medskip
We will also use the self-duality of $\mathbb S_+^n$, namely,
\begin{equation}\label{self_duality}
M\succeq0
\quad\Longleftrightarrow\quad
\langle M,X\rangle\ge0
\quad\text{for all }X\succeq0.
\end{equation}

\medskip
Let $\Omega$ be a convex subset of a finite-dimensional normed space.
A point $\bar z\in\Omega$ is called an \emph{extreme point} of $\Omega$ if,
whenever
$$
\bar z=\theta z^1+(1-\theta)z^2
$$
for some $z^1,z^2\in\Omega$ and $\theta\in(0,1)$, one has
$$
z^1=z^2=\bar z.
$$
Every nonempty compact convex set in a finite-dimensional normed space has
at least one extreme point; see, e.g.,
\cite[Corollary~18.5.1]{Rockafellar1970}.

\medskip
Let $A_1,A_2,A_3\in\mathbb S^n$ be symmetric matrices, and define the
quadratic forms
\begin{equation}\label{eq_def_fi}
f_i(x):=\tfrac12 x^\top A_i x
\qquad (i=1,2,3),\quad x\in\R^n.
\end{equation}
We consider the upper image defined by
$$
S:=\{(f_1(x),f_2(x),f_3(x)):\ x\in\R^n\}+\R_+^3.
$$

The next section studies the convexity of this set.

\section{Convexity of the Upper Image}

We first establish the following lemma, which is a key step in the proof of
our main result.

\begin{lemma}\label{lem_rankone}
Let $A_1,A_2,A_3\in\mathbb S^n$. Assume that there exists
$\lambda=(\lambda_1,\lambda_2,\lambda_3)\in\R_+^3\setminus\{0\}$ such that
\begin{equation}\label{ineq_matrix_0}
\lambda_1A_1+\lambda_2A_2+\lambda_3A_3\succeq0.
\end{equation}
Then, for every $X\succeq0$, there exists $x\in\R^n$ such that
\begin{equation}\label{ineq_matrix}
x^\top A_i x\le\langle A_i,X\rangle,\qquad i=1,2,3.
\end{equation}

\end{lemma}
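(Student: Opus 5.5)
We will prove Lemma~\ref{lem_rankone} by a rank-reduction argument carried out inside the face of $\mathbb S^n_+$ generated by the current matrix. Fix $X\succeq0$ and put $b_i:=\langle A_i,X\rangle$. Consider the set
$$
\mathcal F:=\{Z\succeq0:\ \langle A_i,Z\rangle\le b_i,\ i=1,2,3\},
$$
which contains $X$. The goal is to produce a $Z\in\mathcal F$ with $\rank Z\le1$. Then \eqref{equ_matrix_reprensent_2} gives $Z=xx^\top$ (with $x=0$ if $Z=0$), and $x^\top A_ix=\langle A_i,Z\rangle\le b_i$, which is \eqref{ineq_matrix}. Set $M:=\lambda_1A_1+\lambda_2A_2+\lambda_3A_3\succeq0$ as in \eqref{ineq_matrix_0}.

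\emph{Reduction step.} Let $Z\in\mathcal F$ with $\rank Z=r\ge2$. Write $Z=VV^\top$ with $V\in\R^{n\times r}$ of full column rank, as in \eqref{equ_matrix_reprensent}, and set $B_i:=V^\top A_iV\in\mathbb S^r$. Every matrix of the form $V(I_r+tD)V^\top$ with $D\in\mathbb S^r$ satisfies
$$
\langle A_i,V(I_r+tD)V^\top\rangle=\langle A_i,Z\rangle+t\langle B_i,D\rangle .
$$
Hence it suffices to find $D\in\mathbb S^r$ such that
\begin{itemize}
\item $\langle B_i,D\rangle\le0$ for $i=1,2,3$, and
\item $D$ is not positive semidefinite.
\end{itemize}
Given such a $D$, let $t^*:=\sup\{t\ge0:\ I_r+tD\succeq0\}$. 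Then $t^*=1/|\mu_{\min}(D)|$ is finite and positive. The matrix $Z':=V(I_r+t^*D)V^\top$ lies in $\mathcal F$, because $t^*\langle B_i,D\rangle\le0$. Moreover $I_r+t^*D$ is singular, so $\rank Z'<r$ since $V$ has full column rank. Iterating this step at most $n-1$ times yields a matrix in $\mathcal F$ of rank at most one.

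\emph{Finding $D$.} This is the main step, and it is the only place where \eqref{ineq_matrix_0} is used. Consider the linear map $L:\mathbb S^r\to\R^3$ given by $L(D)=(\langle B_1,D\rangle,\langle B_2,D\rangle,\langle B_3,D\rangle)$. We split into two cases.
\begin{itemize}
\item Suppose $L$ is not surjective. Since $\dim\mathbb S^r=r(r+1)/2\ge3$ for $r\ge2$, the map $L$ then has a nonzero kernel element $D_0$. Both $D_0$ and $-D_0$ satisfy the inequalities, and $D_0$ and $-D_0$ cannot both be positive semidefinite. So one of $\pm D_0$ works.
\item Suppose $L$ is surjective. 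Choose $D$ with $\langle B_i,D\rangle=-1$ for all $i$. Note that $V^\top MV=\sum_i\lambda_iB_i\succeq0$. If $D$ were positive semidefinite, the self-duality \eqref{self_duality} would give
$$
0\le\langle V^\top MV,D\rangle=-(\lambda_1+\lambda_2+\lambda_3)<0,
$$
which is a contradiction because $\lambda\in\R^3_+\setminus\{0\}$. So this $D$ is not positive semidefinite.
\end{itemize}
In both cases the required direction exists, which completes the reduction step and hence the proof.

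I expect the only delicate point to be this second case. It is exactly where three quadratic forms can fail without a hypothesis like \eqref{ineq_matrix_0}: when $r=2$, the map $L$ can be an isomorphism, and then every admissible direction might point into the positive semidefinite cone. The positivity of $\sum_i\lambda_i$ is what rules this out.
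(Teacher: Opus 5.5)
Your argument is correct, and it takes a genuinely different route from the paper's.

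\textbf{The paper's route.} The paper works by optimization. It minimizes $\tr Z$ over $\mathcal P=\{Z\succeq0:\ \langle A_i,Z\rangle\le\langle A_i,X\rangle,\ i=1,2,3\}$ and uses $\|Z\|\le\tr Z$ to get a nonempty compact convex solution set. It takes an extreme point $Z^*$ of that set and checks that $Z^*$ is also extreme in $\mathcal P$. The perturbations $V(I_r\pm tH)V^\top$ along kernel directions, which you use constructively, serve there only to contradict extremality. This gives the bound $r(r+1)/2\le|\mathcal I|\le3$, so $r\le2$. In the case $r=2$ it also forces all three constraints to be active and $B_1,B_2,B_3$ to be a basis of $\mathbb S^2$. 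The paper then takes the dual basis $G_1,G_2,G_3$, uses $\lambda$ to show some $G_k$ has a positive eigenvalue $\gamma$, and sets $\widehat Z=V(I_2-\gamma^{-1}G_k)V^\top$.

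\textbf{How the two relate.} That final move is exactly one instance of your reduction step, with $D=-G_k$ and $t^*=1/\gamma$. In the rank-two case, your choice $D=L^{-1}(-1,-1,-1)$ is $-(G_1+G_2+G_3)$ in the paper's notation. Your contradiction $0\le-(\lambda_1+\lambda_2+\lambda_3)$ plays the role of the paper's $0\le\lambda_i\le0$.

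\textbf{What each approach buys.} By walking to the boundary of the cone instead of arguing by extremality, you avoid several ingredients: the trace minimization, the compactness and existence-of-extreme-point arguments, the tracking of active constraints, and the appeal to a dual-basis theorem. You also get an explicit finite procedure of at most $n-1$ rank reductions, each using only linear algebra. In return, the paper's route gives structural information about $\mathcal P$ itself: its extreme points have rank at most $2$, with a rigid description in the rank-two case. Your case split isolates the same critical configuration ($r=2$ with $L$ bijective), but only along the path of your iteration.
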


\begin{proof}
	If $X=0$, then we get the conclusion by taking $x=0$. Now suppose that $X\ne0$ and set
	$$
	a_i:=\langle A_i,X\rangle,\qquad i=1,2,3.
	$$
	Consider the problem
	\begin{equation}\label{eq_trace_min}
	\min\{\tr Z:\ Z\in\mathcal P\},
	\end{equation}
	where
	$$
	\mathcal P:=
	\big\{Z\in\mathbb S_+^n:\ \langle A_i,Z\rangle\le a_i,\quad i=1,2,3\big\}.
	$$
	Since $X\in\mathcal P$, the set $\mathcal P$ is nonempty. Moreover, the set
	$	\{Z\in\mathcal P:\ \tr Z\le\tr X\}$
	is closed and bounded. Indeed, for $Z$ belonging to this set, from \eqref{ine_trace_norm} we have
	$	\|Z\|\le\tr Z\le\tr X.	$
	Hence this set is compact, and the problem \eqref{eq_trace_min} admits a solution. Moreover, it is clear that the solution set of \eqref{eq_trace_min}  is convex and compact.
So it has an extreme point. 

\medskip
Let $Z^*$ be  an extreme point of this set.
	We first note that $Z^*$ is also an extreme point of $\mathcal P$. Indeed,
	suppose that
	$$
	Z^*=\frac12(Z^1+Z^2)
	$$
	for some $Z^1,Z^2\in\mathcal P$. Since $Z^*$ is a solution and the
	objective function is linear, we have
	$$
	\tr Z^*
	=
	\frac12\big(\tr Z^1+\tr Z^2\big).
	$$
	Since
	$	\tr Z^1\ge\tr Z^*$	and	$\tr Z^2\ge\tr Z^*,	$
	both $Z^1$ and $Z^2$ are also solutions. Since $Z^*$ is an extreme point of the solution set, we get 
	$Z^1=Z^2=Z^*$, implying that $Z^*$ is an extreme point of $\mathcal{P}$.
	
		If $\rank Z^*=0$,  then we get the conclusion by taking $x=0$. If $\rank Z^*=1$, we obtain
	$Z^*=xx^\top$ for some $x\in \mathbb{R}^n$.
	Since $Z^*\in\mathcal P$,
	$$
	x^\top A_i x
	=
	\langle A_i,Z^*\rangle
	\le \langle A_i,X\rangle.
	\qquad i=1,2,3.
	$$
	This means that \eqref{ineq_matrix} holds.
	
	\medskip
We now consider the case
	$$
	r:=\rank Z^*\ge2.
	$$
	By \eqref{equ_matrix_reprensent}, we can express $Z^*$ as
	$	Z^*=VV^\top,$
	where $V\in\R^{n\times r}$ has full column rank. Let
	$$
	\mathcal I:=
	\{i\in\{1,2,3\}:\ \langle A_i,Z^*\rangle=a_i\}
	$$
	be the set of active constraints at $Z^*$.
	
	If
\begin{equation}\label{ine_dim_active}
		\dim\mathbb S^r=\frac{r(r+1)}2>|\mathcal I|,
\end{equation}
	then there exists a nonzero matrix $H\in\mathbb S^r$ such that
	$$
	\langle V^\top A_iV,H\rangle=0,
	\qquad i\in\mathcal I.
	$$
	For sufficiently small $t>0$, we have
	$	I_r\pm tH\succ0$,	where $I_r$ denotes the $r\times r$ identity matrix.
	In addition, for $i\notin\mathcal I$, we have
	$	\langle A_i,Z^*\rangle<a_i.	$
	Hence, by taking $t>0$ sufficiently small, we also have
	$$
	\big\langle A_i,V(I_r\pm tH)V^\top\big\rangle<a_i,
	\qquad i\notin\mathcal I.
	$$
	Therefore,
	$$
	Z^\pm:=V(I_r\pm tH)V^\top
	$$
	belong to $\mathcal P$. Since $H\ne0$ and $V$ has full
	column rank, $Z^+\ne Z^-$. On the other hand, it holds that
	$$
	Z^*=\frac12(Z^++Z^-).
	$$
	This contradicts the fact that $Z^*$ is an extreme point of $\mathcal P$. It follows that \eqref{ine_dim_active} does not hold, implying 
	$$
	\frac{r(r+1)}2\le 	|\mathcal I|\le3.
	$$
Combining this with the fact that $r\ge2$  gives
	$$
	r=2
	\qquad\text{and}\qquad
	|\mathcal I|=3.
	$$
	Thus all three constraints are active at $Z^*$.
	
	Set
	$$
	B_i:=V^\top A_iV,\qquad i=1,2,3.
	$$
		From the assumption \eqref{ineq_matrix_0} we obtain
		\begin{equation}\label{ineq_matrix_1}
		\sum_{j=1}^3\lambda_jB_j
		=
		V^\top
		\left(\sum_{j=1}^3\lambda_jA_j\right)V
		\succeq0.
		\end{equation}
		
	We now show that $B_1,B_2,B_3$ are linearly independent in $\mathbb S^2$. Otherwise, since
	$\dim\mathbb S^2=3$, there would exist a nonzero matrix $H\in\mathbb S^2$ such
	that
$$
	\langle B_i,H\rangle=0,
	\qquad i=1,2,3.
	$$
	
	Using the same argument as above, for sufficiently small $t>0$, set
	$$
	Z^\pm:=V(I_2\pm tH)V^\top.
	$$
	Then $Z^+,Z^-\in\mathcal P$ and $Z^+\ne Z^-$. Moreover,
	$	Z^*=\frac12(Z^++Z^-).$
	This contradicts the fact that $Z^*$ is an extreme point of $\mathcal P$.
	 Hence
	$B_1,B_2,B_3$ form a basis of $\mathbb S^2$. 
	Thus, by \cite[Theorem 14.2]{BowenWang1976} there exists a  basis
	$\{G_1,G_2,G_3\}\subset\mathbb S^2$ such that
		\begin{equation}\label{equ_basis}
	\langle B_j,G_i\rangle=\delta_{ij},
	\qquad i,j=1,2,3,
	\end{equation}
	where $\delta_{ij}=1$ if $i=j$ and $\delta_{ij}=0$ otherwise.

	We next show that at least one $G_i$ has a positive eigenvalue. Otherwise,
	$G_i\preceq0$ for every $i=1,2,3.$ Combining this with \eqref{ineq_matrix_1} and \eqref{equ_basis}, we get
	$$
	0\le\lambda_i
	=
	\Big\langle
	\sum_{j=1}^3\lambda_jB_j,G_i
	\Big\rangle
	\le0,
	\qquad i=1,2,3.
	$$
	This means that $\lambda=0$, a contradiction.
	
	\medskip
	Hence, without loss of generality, we assume that the largest eigenvalue of $G_1$
	$	\gamma:=\lambda_{\max}(G_1)>0.	$
	This implies that	$	W:=I_2-\frac1\gamma G_1\succeq0.$	
	Moreover, one eigenvalue of $W$ is zero, and so
	$	\rank W\le~1.$
	
	For $i=1,2,3$,
	$$
	\langle B_i,W\rangle
	=
	\tr B_i-\frac{\delta_{i1}}{\gamma}
	\le\tr B_i.
	$$
	Since all three constraints are active at $Z^*$, it holds
	$$
	\tr B_i
	=
	\langle A_i,Z^*\rangle
	=
	a_i.
	$$
	Thus
	$$
	\langle B_i,W\rangle\le a_i,
	\qquad i=1,2,3.
	$$
	It follows that 
		$$
	\langle A_i,\widehat Z\rangle
	=
	\langle B_i,W\rangle
	\le a_i,
	\qquad i=1,2,3,
	$$
	where	$	\widehat Z:=VWV^\top.$
	Moreover, $\widehat Z\succeq0$ as $W\succeq0$.
	Hence $\widehat Z\in\mathcal P$. In addition, since $\rank W\le 1$ we also have $\rank\widehat Z\le1$.
	
	\medskip
	Using the same argument as in case ${\rm rank}Z^*\le1$, we get the conclusion.

\medskip
	The proof is complete.
\end{proof}

We can now characterize completely the convexity of the upper image.

\medskip
\begin{theorem}\label{thm_S_convex}
Let $f_i$ $(i=1,2,3)$ be the quadratic forms defined in \eqref{eq_def_fi}. Then the upper image
$$
S=\{(f_1(x),f_2(x),f_3(x)):\ x\in\R^n\}+\R_+^3
$$
is convex if and only if at least one of the following conditions holds:
\begin{enumerate}
\item[(i)] There exists $\lambda=(\lambda_1,\lambda_2,\lambda_3)\in\R_+^3\setminus\{0\}$ such that
$$
\lambda_1A_1+\lambda_2A_2+\lambda_3A_3\succeq0.
$$
\item[(ii)] There exists $d\in\R^n$ such that
$$
d^\top A_i d<0,\qquad i=1,2,3.
$$
\end{enumerate}
Moreover, conditions \textnormal{(i)} and \textnormal{(ii)} are mutually exclusive, and under the condition \textnormal{(ii)} one has $S=\R^3$.
\end{theorem}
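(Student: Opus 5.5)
We prove sufficiency of (i) and of (ii) separately, then necessity. For (i), the natural route is to identify $S$ with a convex set built from the semidefinite cone. Put
$$
C:=\{(\tfrac12\langle A_1,X\rangle,\tfrac12\langle A_2,X\rangle,\tfrac12\langle A_3,X\rangle):\ X\succeq0\}+\R_+^3 .
$$
This set is convex, being the image of the convex cone $\mathbb S_+^n$ under a linear map, plus $\R^3_+$. Taking $X=xx^\top$ gives $S\subseteq C$. Conversely, if $y=(\tfrac12\langle A_i,X\rangle)_i+u$ with $u\in\R^3_+$, Lemma~\ref{lem_rankone} gives $x$ with $f_i(x)\le \tfrac12\langle A_i,X\rangle$ for each $i$. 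Hence $y\in (f_i(x))_i+\R^3_+\subseteq S$, so $S=C$ is convex.

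For (ii), fix $d$ with $d^\top A_id<0$ for $i=1,2,3$. For any $y\in\R^3$ choose $t>0$ large enough that $t^2 f_i(d)\le y_i$ for each $i$. Then $y\in (f_i(td))_i+\R^3_+$, so $S=\R^3$, which is convex.

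Mutual exclusivity is immediate. If (i) and (ii) both held, then
$$
0\le d^\top\Big(\sum\lambda_iA_i\Big)d=\sum\lambda_i\, d^\top A_i d<0,
$$
because $\lambda\ge0$, $\lambda\ne0$ and each $d^\top A_id<0$. This is a contradiction.

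Necessity is the main step. Assume $S$ is convex and (ii) fails; we must produce $\lambda$ as in (i). Since (ii) fails, no $x$ has all $f_i(x)<0$, so $S\cap(-\operatorname{int}\R^3_+)=\emptyset$. Here $-\operatorname{int}\R^3_+$ is a nonempty open convex set and $S$ is convex by hypothesis. The separation theorem then yields $\lambda\ne0$ and $\alpha$ with $\langle\lambda,s\rangle\ge\alpha\ge\langle\lambda,w\rangle$ for all $s\in S$ and $w\in-\operatorname{int}\R^3_+$.

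We then read off sign information in three steps:
\begin{enumerate}
\item[(a)] Since $S+\R^3_+\subseteq S$, letting $s$ grow in a coordinate direction forces $\lambda\in\R^3_+$.
\item[(b)] Letting $w\to0$ gives $\alpha\ge0$; since $0\in S$ we also have $\alpha\le0$, so $\alpha=0$.
\item[(c)] With $\alpha=0$, the inequality on $S$ gives $\sum\lambda_if_i(x)\ge0$ for all $x$, i.e.\ $\tfrac12x^\top(\sum\lambda_iA_i)x\ge0$. Hence $\sum\lambda_iA_i\succeq0$, which is (i).
\end{enumerate}
The only care needed is the separation step. Because the open convex set $-\operatorname{int}\R^3_+$ is disjoint from the convex set $S$, the standard finite-dimensional separation theorem applies directly.
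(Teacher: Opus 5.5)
Your proof is correct. The sufficiency of (ii) and the mutual exclusivity coincide with the paper. Your treatment of (i) rests on the same Lemma~\ref{lem_rankone}; writing it as the identity $S=\{(\tfrac12\langle A_i,X\rangle)_{i}:X\succeq0\}+\R^3_+$ is a slightly more informative form of the paper's pointwise argument with $X=\theta x^1(x^1)^\top+(1-\theta)x^2(x^2)^\top$.

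The necessity direction is where you genuinely differ. The paper proves ``not (i) $\Rightarrow$ (ii)'':
\begin{enumerate}
\item[$\bullet$] it separates the always-convex cone $K=\{(\langle A_i,X\rangle)_i:X\succeq0\}$ from $-\operatorname{int}\R^3_+$;
\item[$\bullet$] it uses self-duality of $\mathbb S^n_+$ to obtain $X\succeq0$ with $\langle A_i,X\rangle<0$ for all $i$;
\item[$\bullet$] it decomposes $X=\sum_j x^j(x^j)^\top$ and uses that the convex cone $S$ is closed under addition to reduce this to a single vector $d$.
\end{enumerate}
You prove ``not (ii) $\Rightarrow$ (i)'' instead. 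You separate $S$ itself from $-\operatorname{int}\R^3_+$ and read off $\lambda\ge0$, $\alpha=0$ and $\sum_i\lambda_iA_i\succeq0$ directly.

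Your route is shorter and more elementary. It needs no semidefinite relaxation, no self-duality and no rank decomposition; it is the standard separation argument from proofs of the S-lemma. The paper's route instead explains the result through the semidefinite relaxation: failure of (i) means the relaxed system is strictly feasible, and convexity of $S$ lets a relaxed solution be rounded to rank one. This matches the viewpoint of the lemma used for sufficiency. Neither necessity argument depends on having exactly three forms; only the sufficiency of (i), through the rank bound in Lemma~\ref{lem_rankone}, uses $m=3$.
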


\begin{proof}
	First suppose that \textnormal{(i)} holds. Let
\begin{equation}\label{eq_y_j}
	y^j=(f_1(x^j),f_2(x^j),f_3(x^j))+r^j\in S,\quad j=1,2,
\end{equation}
	where $ x^j\in \mathbb{R}^n,\, r^j\in\R_+^3,$	and let $\theta\in(0,1)$. Set
	$$
	X:=\theta x^1(x^1)^\top+(1-\theta)x^2(x^2)^\top\succeq0.
	$$
	By Lemma~\ref{lem_rankone}, there exists $x\in\R^n$ such that
	$$
	x^\top A_i x\le\langle A_i,X\rangle,
	\qquad i=1,2,3.
	$$
It follows that
	$$
	f_i(x)\le
	\theta f_i(x^1)+(1-\theta)f_i(x^2),
	\qquad i=1,2,3.
	$$
Combining this with \eqref{eq_y_j} we get
	$$
	\theta y^1+(1-\theta)y^2
	=
	(f_1(x),f_2(x),f_3(x))+r
	$$
	for some $r\in\R_+^3$. Hence
	$$
	\theta y^1+(1-\theta)y^2\in S,
	$$
	and so $S$ is convex.
	
	\medskip
	Next suppose that \textnormal{(ii)} holds. Let
	$y=(y_1,y_2,y_3)\in\R^3$. Since
	$$
	d^\top A_i d<0,\qquad i=1,2,3,
	$$
	we can choose $t>0$ large enough so that
	$$
	f_i(td)=\frac{t^2}{2}d^\top A_i d\le y_i,
	\qquad i=1,2,3.
	$$
	Hence
	$$
	y=(f_1(td),f_2(td),f_3(td))+r
	$$
	for some $r\in\R_+^3$. Thus $y\in S$. Since $y\in\R^3$ was arbitrary,
	we obtain	$	S=\R^3.	$
	In particular, $S$ is convex.
	
	\medskip
	Conversely, suppose that $S$ is convex and that \textnormal{(i)} does not
	hold. We first show that there exists $X\succeq0$ such that
	$$
	\langle A_i,X\rangle<0,
	\qquad i=1,2,3.
	$$
	Suppose that no such $X$ exists. Define
	$$
	K:=
	\big\{(\langle A_1,X\rangle,\langle A_2,X\rangle,
	\langle A_3,X\rangle):X\succeq0\big\}.
	$$
	Then, we have
	$$
	K\cap(-\operatorname{int}\R_+^3)=\emptyset.
	$$
	The set $K$ is a convex cone, and $-\operatorname{int}\R_+^3$ is an open
	convex cone. By the separation theorem, there exists a nonzero
	$\lambda\in\R^3$ such that
	$$
	\langle\lambda,y\rangle\ge0
	\qquad\forall\, y\in K,
	$$
	and
	$$
	\langle\lambda,z\rangle\le0\qquad
	\forall\, z\in-\operatorname{int}\R_+^3.
	$$
	The second inequality gives
	$	\lambda\in\R_+^3\setminus\{0\}$ and 
	the first inequality gives
	$$
	\Big\langle
	\sum_{i=1}^3\lambda_iA_i,X
	\Big\rangle
	\ge0,\qquad
	\forall\, X\succeq0.
	$$
	Combining this with \eqref{self_duality}, we obtain
	$$
	\sum_{i=1}^3\lambda_iA_i\succeq0,
	$$
	a contradiction. Therefore, there exists
	$X\succeq0$ such that
	$$
	\langle A_i,X\rangle<0,
	\qquad i=1,2,3.
	$$
	
Since $X\succeq0$, by \eqref{equ_matrix_reprensent_2}, $X$ can be written as
$$
X=\sum_{j=1}^p x^j(x^j)^\top,
$$
where $p=\rank X$ and $x^1,\ldots,x^p\in\R^n$.
	Since $S$ is a convex cone, it is closed under addition. Thus
	$$
	\Big(	\frac12\langle A_1,X\rangle,
	\frac12\langle A_2,X\rangle,
	\frac12\langle A_3,X\rangle
	\Big)
=
	\sum_{j=1}^p
	(f_1(x^j),f_2(x^j),f_3(x^j))
	\in S.
	$$
Hence, there exist $d\in\R^n$ and
	$r=(r_1,r_2,r_3)\in\R_+^3$ such that
	$$
	f_i(d)+r_i
	=
	\frac12\langle A_i,X\rangle,
	\qquad i=1,2,3.
	$$
	It follows that
	$$
	d^\top A_i d
	\le
	\langle A_i,X\rangle
	<0,
	\qquad i=1,2,3.
	$$
	Thus \textnormal{(ii)} holds.
	
	Finally, conditions \textnormal{(i)} and \textnormal{(ii)} cannot hold at
	the same time. Otherwise,
	$$
	0
	\le
	d^\top\left(\sum_{i=1}^3\lambda_iA_i\right)d
	=
	\sum_{i=1}^3\lambda_i d^\top A_i d
	<0,
	$$
a contradiction.

\medskip
The proof is complete.
\end{proof}

\begin{corollary}\label{cor_one_convex}
If at least one of the matrices $A_1,A_2,A_3$ is positive semidefinite, then $S$ is convex.
\end{corollary}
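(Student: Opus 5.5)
The corollary follows directly from condition (i) of Theorem~\ref{thm_S_convex}, so the plan is simply to exhibit a suitable multiplier vector. Suppose $A_k\succeq0$ for some $k\in\{1,2,3\}$. Take $\lambda=e_k$, the $k$-th standard unit vector in $\R^3$. Then $\lambda\in\R_+^3\setminus\{0\}$ and
$$
\lambda_1A_1+\lambda_2A_2+\lambda_3A_3=A_k\succeq0 .
$$
Thus condition (i) of Theorem~\ref{thm_S_convex} holds, and the theorem (its sufficiency direction, which rests on Lemma~\ref{lem_rankone}) yields that $S$ is convex.

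There is no real obstacle here. The only point worth checking is that the multiplier is nonzero and nonnegative, which is immediate for a unit vector.

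A remark could also note what the corollary implies. By the mutual exclusivity part of Theorem~\ref{thm_S_convex}, condition (ii) then fails, so $S\neq\R^3$ in this situation. Concretely, if $f_k$ is convex, then $S$ is contained in the half-space $\{y: y_k\ge0\}$.
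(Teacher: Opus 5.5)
Your proof is correct and matches the paper's argument: choosing $\lambda=e_k$ (the paper writes $\lambda_k=1$, $\lambda_i=0$ for $i\neq k$) verifies condition (i) of Theorem~\ref{thm_S_convex}, which yields convexity of $S$. Your closing remark is also sound, though mutual exclusivity alone does not give $S\neq\R^3$; the half-space argument you give does, since $f_k\ge0$ forces $S\subseteq\{y:\ y_k\ge0\}$.
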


\begin{proof}
	Suppose that $A_k\succeq0$ for some $k\in\{1,2,3\}$. Define
	$\lambda=(\lambda_1,\lambda_2,\lambda_3)\in\mathbb R_+^3$ by
	$$
	\lambda_k=1
	\qquad\text{and}\qquad
	\lambda_i=0\quad\text{for }i\ne k.
	$$
	Then $\lambda\ne0$ and
	$$
	\lambda_1A_1+\lambda_2A_2+\lambda_3A_3=A_k\succeq0.
	$$
	Thus condition {\rm (i)} of Theorem~\ref{thm_S_convex} holds, and hence
	$S$ is convex.
\end{proof}

\medskip
We next present several examples illustrating the characterization in Theorem~\ref{thm_S_convex} and the role of the homogeneity assumption. The first shows that the upper image may be convex even when the joint range is nonconvex.

\begin{example}\label{exam_convexcase}
	Consider the quadratic forms on $\R^3$ given by
	$$
	f_1(x)=x_1x_3+x_2x_3,\qquad 
	f_2(x)=2x_1^2-x_2^2,\qquad 
	f_3(x)=x_3^2,\qquad 
	x=(x_1,x_2,x_3)\in\R^3.
	$$
	It is shown in \cite[Example~3.2]{Polyak1998} that the joint range
	$\{(f_1(x),f_2(x),f_3(x)):\ x\in\R^3\}$ is not convex.
	Nevertheless, since $f_3$ is convex, Corollary~\ref{cor_one_convex} implies that the corresponding upper image
	$$
	S=\{(f_1(x),f_2(x),f_3(x)):\ x\in\R^3\}+\R_+^3
	$$
	is convex.
\end{example}

\begin{example}\label{exam_nonconvex}
	Define three quadratic forms on $\R^3$ by
	$$
	f_1(x)=x_1x_2,\qquad 
	f_2(x)=x_2x_3,\qquad 
	f_3(x)=x_3x_1,\qquad 
	x=(x_1,x_2,x_3)\in\R^3.
	$$
	Then
	$$
	S=\{(f_1(x),f_2(x),f_3(x)):\ x\in\R^3\}+\R_+^3
	$$
	is not convex.
\end{example}

\begin{proof}

We will show that neither condition of
Theorem~\ref{thm_S_convex} holds. Indeed, we have
$$
f_1(x)f_2(x)f_3(x)=(x_1x_2x_3)^2\ge0,
$$
so condition {\rm (ii)} does not hold. Moreover, if
$$
\lambda_1A_1+\lambda_2A_2+\lambda_3A_3\succeq0
$$
for some $\lambda=(\lambda_1,\lambda_2,\lambda_3)\in\R_+^3$, then 
$\lambda_1=\lambda_2=\lambda_3=0.$ This means that $\lambda=0$.
Thus condition {\rm (i)} also fails. By
Theorem~\ref{thm_S_convex}, $S$ is not convex.
\end{proof}

The next example shows that the homogeneity assumption in Corollary~\ref{cor_one_convex} is essential. The result does not extend to three
general quadratic functions, even when at least one component is convex.
\begin{example}
 On $\mathbb R$, we define
 $$
 f_1(t)=-2t^2+3t,\qquad
 f_2(t)=t^2-t,\qquad
 f_3(t)=t^2-3t .
 $$
 Then $f_2$ and $f_3$ are convex. However, the upper image
 $$
 \{(f_1(t),f_2(t),f_3(t)):t\in\mathbb R\}+\mathbb R^3_+
 $$
 is not convex.  
 \end{example} 
 Indeed, we have
 $$
 (f_1(-1),f_2(-1),f_3(-1))=(-5,2,4),
 \qquad
 (f_1(1),f_2(1),f_3(1))=(1,0,-2),
 $$
 so their midpoint is
 $ \bar y=(-2,1,1).$
 If $\bar y$ belongs to the upper image, then there exists $t\in\mathbb R$
 such that
 $$
 -2t^2+3t\le -2,\qquad
 t^2-t\le 1,\qquad
 t^2-3t\le 1.
 $$
 The first inequality gives $t\le -1/2$ or $t\ge2$, while the second gives
 $$
 \frac{1-\sqrt5}{2}\le t\le \frac{1+\sqrt5}{2},
 $$
 and the third gives
 $$
 \frac{3-\sqrt{13}}{2}\le t\le \frac{3+\sqrt{13}}{2}.
 $$
 These three conditions are incompatible. Hence $\bar y$ is not in the upper
 image, proving its nonconvexity.

\section{Applications}

We now present several applications of the convexity results obtained in the
previous section. We first consider an associated value-function problem.
We show that its feasible-parameter set is convex and, when the objective
quadratic form is convex, the corresponding value function is also convex.

\medskip
For $(a,b)\in\R^2$, define
$$
D(a,b):=\big\{x\in\R^n:\ f_1(x)\le a,\ f_2(x)\le b\big\},
$$
and
$$
C:=\big\{(a,b)\in\R^2:\ D(a,b)\ne\emptyset\big\}.
$$

\begin{proposition}
	The set $C$ is a convex cone. More precisely,
	\begin{equation}\label{eq_domC}
	C=\big\{(f_1(x),f_2(x)):\ x\in\R^n\big\}+\R_+^2.
	\end{equation}
\end{proposition}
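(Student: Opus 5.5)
The plan is to verify the set identity \eqref{eq_domC} first, and then obtain convexity and the cone property from it. For the identity, I will check both inclusions directly. If $(a,b)\in C$, pick $x\in D(a,b)$. Then $(a,b)=(f_1(x),f_2(x))+(a-f_1(x),\,b-f_2(x))$, and the second summand lies in $\R_+^2$. Conversely, if $(a,b)=(f_1(x),f_2(x))+(r_1,r_2)$ with $r_i\ge0$, then $f_1(x)\le a$ and $f_2(x)\le b$, so $x\in D(a,b)$ and $(a,b)\in C$.

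For convexity, I will use that the joint range $\{(f_1(x),f_2(x)):x\in\R^n\}$ of two quadratic forms is a convex cone. This is Dines' theorem, cited in the introduction \cite{Dines1941,PolikTerlaky2007}. The sum of two convex cones is again a convex cone, so $C$ is a convex cone.

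If I want to stay inside the paper's own results instead, I can derive this from Theorem~\ref{thm_S_convex} and Corollary~\ref{cor_one_convex}. Take $A_3=0$. Then $A_3\succeq0$, so the corresponding $S=\{(f_1(x),f_2(x),0)\}+\R_+^3$ is convex. Its image under the linear projection $(y_1,y_2,y_3)\mapsto(y_1,y_2)$ is exactly $\{(f_1(x),f_2(x))\}+\R_+^2=C$. Linear images of convex sets are convex, so $C$ is convex.

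For the cone property, homogeneity gives $(f_1(tx),f_2(tx))=t^2(f_1(x),f_2(x))$ for every $t$. So for $\alpha>0$ we have $\alpha(f_1(x),f_2(x))=(f_1(\sqrt\alpha x),f_2(\sqrt\alpha x))$, and also $\alpha\R_+^2=\R_+^2$. Hence $\alpha C\subseteq C$. Moreover $0=(f_1(0),f_2(0))\in C$. Together with convexity, this shows $C$ is a convex cone.

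Nothing here is a real obstacle. The only point needing care is the convexity step: either cite Dines' theorem, or check that the projection argument really lands on $C$ exactly. It does, because the third coordinate is unconstrained after adding $\R_+$ and is simply discarded by the projection.
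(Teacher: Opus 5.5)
Your proof is correct and follows the paper's route: the set identity, then Dines' theorem for convexity of the joint range, then homogeneity for the cone property. The one difference is that you prove the identity by a direct two-inclusion argument, where the paper cites the value-function domain formula of Flores-Bazán, which is cleaner. Your optional alternative, via Corollary~\ref{cor_one_convex} with $A_3=0$ followed by projection, is also valid and not circular, since the proof of Theorem~\ref{thm_S_convex} does not use Dines' theorem.
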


\begin{proof}
	The representation \eqref{eq_domC} follows from the general formula for the
	domain of the optimal value function given in the discussion following
	equation (9) in \cite{FloresBazan2017}. 
	
	Indeed, in the notation of that
	paper, take the underlying set to be $\R^n$ and
	$$
	g(x)=(f_1(x),f_2(x)),\qquad P=\R_+^2.
	$$
	Then  the domain of the corresponding
	optimal value function is $g(\R^n)+P$, which gives exactly \eqref{eq_domC}.
	
	Moreover, by \cite[Theorem~1]{Dines1941}, the joint range
	$$
	\{(f_1(x),f_2(x)):\ x\in\R^n\}
	$$
	is convex. Since $f_1$ and $f_2$ are homogeneous quadratic forms, this set
	is also a cone. Hence \eqref{eq_domC} shows that $C$ is a convex cone.
\end{proof}

We next consider the corresponding value function. Define
\begin{equation}\label{eq_defpsi}
\psi(a,b):=\inf_{x\in D(a,b)}f_3(x),
\end{equation}
where $\inf\emptyset:=+\infty$. By the definitions of $C$ and $\psi$,
we have $C=\dom\psi.$

\begin{proposition}\label{prop_psi_convex}
	Assume that $A_3\succeq0$. Then the function $\psi$ defined in \eqref{eq_defpsi}  is finite and
	convex on $C$.
\end{proposition}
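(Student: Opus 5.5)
Since $A_3\succeq0$, we have $f_3\ge0$ on $\R^n$. Hence for every $(a,b)\in C$ the set $D(a,b)$ is nonempty and $0\le\psi(a,b)<+\infty$, so $\psi$ is finite on $C$. It remains to prove convexity. For this I will use the epigraph description via the upper image $S$ of $(f_1,f_2,f_3)$, which is convex by Corollary~\ref{cor_one_convex}, because $A_3\succeq0$.

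The key identity I aim for is
$$
\{(a,b,c)\in\R^3:\ (a,b)\in C,\ c>\psi(a,b)\}\subseteq S\subseteq\{(a,b,c):\ (a,b)\in C,\ c\ge\psi(a,b)\}.
$$
For the second inclusion, take $(a,b,c)\in S$. Then there is $x$ with $f_1(x)\le a$, $f_2(x)\le b$ and $f_3(x)\le c$. So $x\in D(a,b)$, which gives $(a,b)\in C$ and $\psi(a,b)\le f_3(x)\le c$.

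For the first inclusion, take $c>\psi(a,b)$. By the definition of the infimum there is $x\in D(a,b)$ with $f_3(x)<c$, and hence $(a,b,c)\in S$.

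Now let $(a^j,b^j)\in C$ for $j=1,2$ and $\theta\in(0,1)$. By the previous proposition $C$ is convex, so the convex combination $(a,b)$ lies in $C$. For any $\varepsilon>0$ the points $(a^j,b^j,\psi(a^j,b^j)+\varepsilon)$ lie in $S$ by the first inclusion. By convexity of $S$, their convex combination
$$
\big(a,b,\theta\psi(a^1,b^1)+(1-\theta)\psi(a^2,b^2)+\varepsilon\big)
$$
also lies in $S$. The second inclusion then yields
$$
\psi(a,b)\le\theta\psi(a^1,b^1)+(1-\theta)\psi(a^2,b^2)+\varepsilon .
$$
Letting $\varepsilon\downarrow0$ gives the convexity inequality.

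There is no serious obstacle. The only delicate point is that the infimum defining $\psi$ need not be attained, which is why I use the strict epigraph and the $\varepsilon$-argument rather than claiming $S$ equals the epigraph. Convexity of $C$ is already established, and it is also a consequence of the convexity of $S$ by projecting onto the first two coordinates.
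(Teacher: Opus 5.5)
Your proposal is correct and follows essentially the same route as the paper: finiteness from $f_3\ge0$, convexity of $S$ via Corollary~\ref{cor_one_convex}, and the $\varepsilon$-argument that places $(a^j,b^j,\psi(a^j,b^j)+\varepsilon)$ in $S$ and then reads off a point of $D$ at the convex combination. Your two inclusions just make these steps explicit. The appeal to convexity of $C$ is harmless but redundant, since the second inclusion already yields it.
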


\begin{proof}
	Since $A_3\succeq0$, it follows from Corollary~\ref{cor_one_convex} that
	the upper image $S$ is convex.
Moreover, we have $f_3(x)\ge0$ for all $x\in\R^n.$ Thus $\psi$ is finite on $C$.

\medskip
	Let $z^j=(a_j,b_j)\in C$, $j=1,2$, let $\theta\in(0,1)$, and fix
	$\varepsilon>0$. By the definition of $\psi$ there
	exists $x^j\in D(z^j)$ such that
	$$
	f_3(x^j)\le \psi(z^j)+\varepsilon,
	\qquad j=1,2.
	$$
	Consequently,
	$$
	(a_j,b_j,\psi(z^j)+\varepsilon)\in S,
	\qquad j=1,2.
	$$
	Since $S$ is convex,
	$$
	\big(\theta a_1+(1-\theta)a_2,
	\theta b_1+(1-\theta)b_2,
	 \theta\psi(z^1)
	+(1-\theta)\psi(z^2)+\varepsilon\big)\in S.
		$$
Hence, there exists $x\in\R^n$ such that
\begin{equation}\label{ineq_domain_D}
	f_1(x)\le\theta a_1+(1-\theta)a_2,\qquad
	f_2(x)\le\theta b_1+(1-\theta)b_2,
\end{equation}
	and
\begin{equation}\label{ineq_domain_object}
	f_3(x)\le
	\theta\psi(z^1)+(1-\theta)\psi(z^2)+\varepsilon.
\end{equation}
From \eqref{ineq_domain_D} we obtain 
$x\in D(\theta z^1+(1-\theta)z^2)$.	From this and \eqref{ineq_domain_object} we deduce that
	$$
	\psi\big(\theta z^1+(1-\theta)z^2\big)
	\le
	\theta\psi(z^1)+(1-\theta)\psi(z^2)+\varepsilon.
	$$
Passing the latter inequality to the limit as $\varepsilon\to0$ yields
	$$
	\psi\big(\theta z^1+(1-\theta)z^2\big)
	\le
	\theta\psi(z^1)+(1-\theta)\psi(z^2),
	$$
	which proves the convexity of $\psi$ on $C$.
\end{proof}

This result shows a hidden convexity property of the associated parametric
quadratic problem. Although the constraint functions $f_1$ and $f_2$ need
not be convex, the optimal value depends convexly on the right-hand-side
parameters $(a,b)$.

\medskip
We next give an application of the first condition in Theorem~\ref{thm_S_convex}.
It leads to a generalized S-lemma for an implication with two quadratic inequalities.
  
  \begin{theorem}\label{prop_shifted_Sprocedure}
  	Let $f_i$ $(i=1,2,3)$ be the quadratic forms defined in \eqref{eq_def_fi}. Assume that there exists
  	$\mu=(\mu_1,\mu_2,\mu_3)\in\R_+^3\setminus\{0\}$ such that
  	$$
  	\mu_1A_1+\mu_2A_2+\mu_3A_3\succeq0.
  	$$
  	Fix $(a,b,c)\in\R^3$ and suppose that the Slater condition
  	\begin{equation}\label{eq_shift_slater}
  	\exists\,x^0\in\R^n:\quad f_2(x^0)<b,\qquad f_3(x^0)<c
  	\end{equation}
  	holds. Consider the implication
  	\begin{equation}\label{eq_shift_imp}
  	f_2(x)\le b,\ \ f_3(x)\le c \ \Longrightarrow\ f_1(x)\ge a.
  	\end{equation}
  	Then \eqref{eq_shift_imp} holds if and only if there exist $\lambda_1,\lambda_2\ge 0$ such that
  	\begin{equation}\label{eq_shift_cert}
  	f_1(x)-a-\lambda_1\bigl(b-f_2(x)\bigr)-\lambda_2\bigl(c-f_3(x)\bigr)\ge 0
  	\qquad \forall x\in\R^n.
  	\end{equation}
  	Equivalently,
  	$$
  	f_1(x)-a+\lambda_1\bigl(f_2(x)-b\bigr)+\lambda_2\bigl(f_3(x)-c\bigr)\ge 0
  	\qquad \forall x\in\R^n.
  $$
  \end{theorem}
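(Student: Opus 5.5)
The easy direction comes first. If \eqref{eq_shift_cert} holds with $\lambda_1,\lambda_2\ge0$ and $x$ satisfies $f_2(x)\le b$, $f_3(x)\le c$, then $\lambda_1(b-f_2(x))\ge0$ and $\lambda_2(c-f_3(x))\ge0$. Hence $f_1(x)-a\ge \lambda_1(b-f_2(x))+\lambda_2(c-f_3(x))\ge0$, which is \eqref{eq_shift_imp}. The substantive part is the converse, and for it I will separate a convex set from an open orthant.

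By assumption condition (i) of Theorem~\ref{thm_S_convex} holds with $\lambda=\mu$, so $S$ is convex. Consider the translate
$$
T:=S-(a,b,c)=\{(f_1(x)-a,\,f_2(x)-b,\,f_3(x)-c):x\in\R^n\}+\R_+^3,
$$
which is also convex. The implication \eqref{eq_shift_imp} says exactly that $T\cap(-\operatorname{int}\R_+^3)=\emptyset$. Indeed, if $(f_1(x)-a,f_2(x)-b,f_3(x)-c)+r$ had all three coordinates negative for some $r\in\R^3_+$, then $f_2(x)<b$, $f_3(x)<c$ and $f_1(x)<a$, contradicting \eqref{eq_shift_imp}.

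Both sets are convex and $-\operatorname{int}\R_+^3$ is open, so the separation theorem gives a nonzero $(\tau,\lambda_1,\lambda_2)$ with $\langle(\tau,\lambda_1,\lambda_2),y\rangle\ge0$ for all $y\in T$ and $\le0$ on $-\operatorname{int}\R_+^3$. As in the proof of Theorem~\ref{thm_S_convex}, the second inequality forces $(\tau,\lambda_1,\lambda_2)\in\R_+^3\setminus\{0\}$. Taking $r=0$ in the first inequality yields
$$
\tau\bigl(f_1(x)-a\bigr)+\lambda_1\bigl(f_2(x)-b\bigr)+\lambda_2\bigl(f_3(x)-c\bigr)\ge0\qquad\forall x\in\R^n.
$$

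The key remaining step, and the only place the Slater condition enters, is to show $\tau>0$. Suppose $\tau=0$ and evaluate the inequality at $x^0$ from \eqref{eq_shift_slater}. This gives $\lambda_1(f_2(x^0)-b)+\lambda_2(f_3(x^0)-c)\ge0$. Both brackets are strictly negative and $(\lambda_1,\lambda_2)\ne0$, so the left side is strictly negative, a contradiction. Hence $\tau>0$, and dividing by $\tau$ (renaming $\lambda_i/\tau$ as $\lambda_i$) gives \eqref{eq_shift_cert}. The equivalent form in the statement is the same inequality after rewriting signs. I do not expect a serious obstacle beyond checking that translation preserves convexity and that the separation yields the sign conditions, both of which mirror arguments already used in the proof of Theorem~\ref{thm_S_convex}.
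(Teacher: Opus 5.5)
Your proof is correct and follows the same route as the paper: convexity of $S$ from condition (i) of Theorem~\ref{thm_S_convex}, translation by $(a,b,c)$, and then the S-procedure under the Slater condition. The only difference is that you carry out the separation argument explicitly (the sign analysis, and the use of $x^0$ to force $\tau>0$) together with the easy direction, whereas the paper invokes \cite[Theorem~1]{LaraUrruty2022}, which packages exactly this argument. One small wording point: \eqref{eq_shift_imp} implies $T\cap(-\operatorname{int}\R_+^3)=\emptyset$ but is not literally the same statement; this does not affect the proof, since that implication is the only direction you use and your justification of it is correct.
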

  
  \begin{proof}
  	By condition \textnormal{(i)} of Theorem~\ref{thm_S_convex}, the set $S$ is convex. Hence its translation
  	$$
  	S-(a,b,c)=\{(f_1(x)-a,\ f_2(x)-b,\ f_3(x)-c):\ x\in\R^n\}+\R_+^3
  	$$
  	is also convex. Define
  	$$
  	\phi_0(x):=f_1(x)-a,\quad \phi_1(x):=b-f_2(x),\quad \phi_2(x):=c-f_3(x),\qquad x\in\R^n.
  	$$
  	Then \eqref{eq_shift_imp} is exactly the implication
  	$$
  	\phi_1(x)\ge0,\ \phi_2(x)\ge0 \ \Longrightarrow\ \phi_0(x)\ge0,
  	$$
  	while the Slater condition \eqref{eq_shift_slater} yields $\phi_1(x^0)>0$ and $\phi_2(x^0)>0$.
  	Moreover,
  	$$
  	\{(\phi_0(x),-\phi_1(x),-\phi_2(x)):\ x\in \R^n\}+\R_+^3
  	= S-(a,b,c),
  	$$
  	which is convex. Therefore, by \cite[Theorem~1]{LaraUrruty2022},
  	\eqref{eq_shift_imp} is equivalent to the existence of $\lambda_1,\lambda_2\ge0$ such that
  	$\phi_0-\lambda_1\phi_1-\lambda_2\phi_2\ge0$ on $\R^n$, which is exactly \eqref{eq_shift_cert}.
  \end{proof}

\begin{remark}\label{remark_2}
If condition \textnormal{(ii)} of Theorem~\ref{thm_S_convex} holds, then the implication \eqref{eq_shift_imp} cannot hold. Indeed, in that case, we have $$f_i(td)=\frac{t^2}{2}d^\top A_i d\to-\infty,\,t\to+\infty\, (i=1,2,3).$$ Hence for $t>0$ sufficiently large, setting $x:=td$, we get $f_2(x)\le b,\,f_3(x)\le c$, while $f_1(x)<a$.
\end{remark}

\medskip
We finish with a theorem of the alternative. Here both conditions in Theorem~\ref{thm_S_convex} have a direct role.
 
  \begin{theorem}\label{prop_strict_alt}
  	Let $f_i$ $(i=1,2,3)$ be the quadratic forms defined in \eqref{eq_def_fi}, and fix $(a,b,c)\in\R^3$.
  	Assume that at least one of the following conditions holds:
  	\begin{enumerate}
  		\item[(A)] There exists $\mu=(\mu_1,\mu_2,\mu_3)\in\R_+^3\setminus\{0\}$ such that
  		$$
  		\mu_1A_1+\mu_2A_2+\mu_3A_3\succeq0.
  		$$
  		\item[(B)] There exists $d\in\R^n$ such that
  		$$
  		d^\top A_i d<0,\qquad i=1,2,3.
  		$$
  	\end{enumerate}
  	Then exactly one of the following statements holds:
  	\begin{enumerate}
  		\item[(i)] There exists $x\in\R^n$ such that
  		$$
  		f_1(x)<a,\qquad f_2(x)<b,\qquad f_3(x)<c.
  		$$
  		\item[(ii)] There exists $\lambda=(\lambda_1,\lambda_2,\lambda_3)\in\R_+^3$ with $\lambda_1+\lambda_2+\lambda_3=1$ such that
  		$$
  		\lambda_1\bigl(f_1(x)-a\bigr)+\lambda_2\bigl(f_2(x)-b\bigr)+\lambda_3\bigl(f_3(x)-c\bigr)\ge 0
  		\qquad \forall x\in\R^n.
  		$$
  	\end{enumerate}
Moreover, under condition {\rm (B)}, statement {\rm (i)} always holds, and hence statement {\rm (ii)} does not hold.
  \end{theorem}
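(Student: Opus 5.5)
The plan is to reduce everything to convex separation, using the convexity of $S$ from Theorem~\ref{thm_S_convex}. Under either (A) or (B), Theorem~\ref{thm_S_convex} gives that $S$ is convex. Hence the translate
$$
T:=S-(a,b,c)=\{(f_1(x)-a,\ f_2(x)-b,\ f_3(x)-c):\ x\in\R^n\}+\R_+^3
$$
is also convex. Statement (i) says exactly that $T\cap(-\operatorname{int}\R_+^3)\neq\emptyset$. To see this, note that if $y\in T$ has all coordinates negative, then $y=(f_1(x)-a,f_2(x)-b,f_3(x)-c)+r$ with $r\ge0$, so each $f_i(x)$ lies strictly below its threshold. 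The converse direction is immediate.

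\textbf{Step 1: (i) and (ii) cannot both hold.} Suppose $x$ satisfies (i) and $\lambda$ satisfies (ii). Since $\lambda\ge0$ and $\sum\lambda_i=1$, at least one $\lambda_i$ is positive. Therefore
$$
\lambda_1(f_1(x)-a)+\lambda_2(f_2(x)-b)+\lambda_3(f_3(x)-c)<0,
$$
which contradicts (ii). This step needs no hypothesis.

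\textbf{Step 2: if (i) fails, then (ii) holds.} Assume (i) fails, so $T\cap(-\operatorname{int}\R_+^3)=\emptyset$. Both sets are convex, nonempty, and the second is open. By the separation theorem, exactly as in the proof of Theorem~\ref{thm_S_convex}, there is a nonzero $\lambda\in\R^3$ with $\langle\lambda,y\rangle\ge\langle\lambda,z\rangle$ for all $y\in T$ and all $z\in-\operatorname{int}\R_+^3$.

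\begin{itemize}
\item Letting $z$ range over $-\operatorname{int}\R_+^3$ forces $\lambda\in\R_+^3\setminus\{0\}$.
\item Letting $z\to0$ gives $\langle\lambda,y\rangle\ge0$ on $T$.
\item Taking $y=(f_1(x)-a,f_2(x)-b,f_3(x)-c)$, which lies in $T$ with $r=0$, gives the inequality in (ii).
\item Normalizing so that $\lambda_1+\lambda_2+\lambda_3=1$ yields (ii).
\end{itemize}

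The only point needing care is the convexity of $T$. This is where hypothesis (A) or (B) enters, through Theorem~\ref{thm_S_convex}, and it is not a real obstacle. No Slater-type condition is needed, because the alternative is stated with a strict system in (i).

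\textbf{Step 3: the final claim under (B).} Under (B), Theorem~\ref{thm_S_convex} gives $S=\R^3$. A more direct argument is available, as in Remark~\ref{remark_2}: take $x=td$ with $t$ large. Then $f_i(td)=\tfrac{t^2}{2}d^\top A_id\to-\infty$ for each $i$, so eventually $f_1(td)<a$, $f_2(td)<b$ and $f_3(td)<c$. Thus (i) holds, and by Step 1 statement (ii) fails.
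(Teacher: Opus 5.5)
Your proof is correct. It has the same outline as the paper's proof. Both get convexity of $S$, and hence of its translate, from Theorem~\ref{thm_S_convex} under (A) or (B), then apply a theorem of the alternative, then use the argument of Remark~\ref{remark_2} for the final claim under (B).

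The difference is in the middle step. The paper does not prove the alternative itself. It invokes \cite[Proposition~2.1]{Roubi2022} to verify Roubi's Assumption~1.1 for $f_1-a,f_2-b,f_3-c$, and then \cite[Theorem~3.1]{Roubi2022} to conclude that exactly one of (i), (ii) holds. You replace this citation with a direct argument:
\begin{itemize}
\item mutual exclusivity by a sign check using $\sum_i\lambda_i=1$;
\item ``(i) fails $\Rightarrow$ (ii)'' by separating the convex set $T$ from the open cone $-\operatorname{int}\R^3_+$, handling the affine separation constant by letting $z\to0$.
\end{itemize}
This is the same separation mechanism the paper uses in the converse half of Theorem~\ref{thm_S_convex}, adapted to a set that is no longer a cone.

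Your version is self-contained. It also makes clear that the only input is convexity of the translated upper image, and that no Slater-type condition is needed because (i) is a strict system. The paper's version is shorter and places the result within Roubi's general framework for alternative theorems. You also correctly note that under (B) your Step~2 is vacuous, since $S=\R^3$ already forces (i).
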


\begin{proof}
	Under either condition {\rm (A)} or condition {\rm (B)},
	Theorem~\ref{thm_S_convex} implies that the upper image $S$ is convex.
	Hence the translated set
	$$
	S-(a,b,c)
	=
	\{(f_1(x)-a,f_2(x)-b,f_3(x)-c):x\in\R^n\}
	+\R_+^3
	$$
	is also convex.
	
	By \cite[Proposition~2.1]{Roubi2022}, Assumption~1.1 in
	\cite[p.~1216]{Roubi2022} holds for the triple
	$$f_1-a,f_2-b,f_3-c$$ with $\mathcal K=\R^n$.
	Therefore, \cite[Theorem~3.1]{Roubi2022} shows that exactly one of
	{\rm (i)} and {\rm (ii)} holds.
	
	\medskip
If condition {\rm (B)} holds, using the same argument as in Remark \ref{remark_2}, we deduce that statement {\rm (i)} holds. Since exactly one of {\rm (i)} and
{\rm (ii)} can hold, statement {\rm (ii)} does not hold. 
\end{proof}
\section{Concluding remarks}

In this paper, we established a complete characterization of the convexity of the
upper image associated with three homogeneous quadratic forms. This result
also covers the important special case where one of the quadratic forms is
convex.

We further obtained convexity results for an associated parametric
quadratic problem and derived applications to a generalized S-lemma and a
strict theorem of the alternative. The examples show that the result applies
beyond the convexity of the joint range and also highlight the importance of
the homogeneity assumption.

\end{document}